\newtheorem{thm}{Theorem}
\newtheorem{lem}[thm]{Lemma}
\newtheorem{prop}[thm]{Proposition}
\theoremstyle{definition}
\theoremstyle{remark}
\newtheorem{claim}[thm]{Claim}
\newfont{\eufm}{eufm10}
\renewcommand{\phi}{\varphi}
\newcommand{\Z}{\mathbb Z}
\newcommand{\Mod}{\operatorname{Mod}}
\newcommand{\A}{\mathcal {A}}
\newcommand{\map}{\mathrm{Mod}}
\newcommand{\Homeo}{\operatorname{Homeo}}
\newcommand{\sep}{\operatorname{Sep_2}}
\begin{document}

\title[Endperiodic loxodromics]{Constructing endperiodic loxodromics of infinite-type arc graphs }

\author{Priyam Patel}
\address{Department of Mathematics\\University of Utah\\Salt Lake City, UT 84112}
\email{patelp@math.utah.edu}

\author{Samuel J. Taylor}
\address{Department of Mathematics\\Temple University \\Philadelphia, PA 19122}
\email{samuel.taylor@temple.edu}

\date{}

\begin{abstract}
We give general conditions to produce endperiodic homeomorphisms that act loxodromically on various arc graphs of infinite-type surfaces.
\end{abstract}

\maketitle

\section{Introduction}

Let $S$ be an infinite-type surface with finitely many ends, possibly with boundary. For simplicity, we further assume that each component of $\partial S$ is compact, but allow for infinitely many boundary components. A homeomorphism $t \colon S \to S$ is \emph{endperiodic} if every end of $S$ is either \emph{attracting} or \emph{repelling} with respect to $t$ (see Section \ref{sec:background}). With $t$ fixed, the set of ends of $S$, denoted $e(S)$, is naturally partitioned into positive (attracting) 
and negative (repelling) ends.

A graph $\A(S)$ is called an \emph{arc graph} of $S$ if its vertices form a $\Mod(S)$--invariant 
collection of proper isotopy classes of essential simple closed curves and properly embedded arcs and where two vertices are joined by an edge if the isotopy classes can be realized disjointly. We further assume that some vertex of $\A(S)$ is either compact or non-separating, or more precisely that it is represented by such a curve or arc.
In any case, there is an action $\Mod(S) \curvearrowright \A(S)$ by isometries.

An essential finite-type subsurface $Y \subset S$ is said to be a \emph{witness} for $\A(S)$ if every vertex of $\A(S)$ intersects $Y$ essentially, i.e. the arc or curve associated to the vertex cannot be realized disjointly from $Y$. We further require that $Y$ separates 
the ends of $S$.
In this case, we have the partition $\partial Y =  \partial_- Y \sqcup \partial_0 Y \sqcup \partial_+ Y$, where each component of $\partial_0 Y$ cobounds a finite-type subsurface and each component of $\partial_\pm Y$ cobounds a $\pm$--end of $S$.

The purpose of this note is to prove the following:
\begin{thm}\label{thm:intro_main}
Let $S$ be an infinite-type surface with finitely many ends and let $t \colon S \to S$ be an endperiodic map.
Let $\A(S)$ be an arc graph of $S$ that contains a vertex which is either compact or non-separating,
and let $Y \subset S$ be
a witness for $\A(S)$. Finally, suppose that
\begin{enumerate}
\item both $\{t^n(\partial_+ Y)\}_{n\ge 1}$ and $\{t^{-n}(\partial_- Y)\}_{n\ge 1}$ are sequences of disjoint multicurves that are disjoint from $Y$ and exit the ends of $S$ as $n\to \infty$, and
\item both $t(\partial_- Y)$ and $t^{-1}(\partial_+ Y)$ intersect $Y$ essentially.
\end{enumerate}
Then for any $g \in \Mod(Y)$ such that 

\begin{enumerate}[resume]
\item $d_Y(t(\partial_- Y), t^{-1}(\partial_+ Y)) + 61 \le \ell_Y(g)$,
\end{enumerate}
the endperiodic map $f = tg$ acts loxodomically on $\A(S)$.
\end{thm}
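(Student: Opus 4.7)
The plan is to exhibit an $f$-invariant bi-infinite quasi-axis in $\A(S)$, with progress detected by subsurface projections to the witness translates $Y_k := f^k(Y)$.

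\emph{Dynamics of the witness orbit.} First, using that $g$ is supported in $Y$ together with condition~(1), I would verify that $f$ shuffles the $Y_k$ equivariantly: $f(Y_k)$ is isotopic to $Y_{k+1}$, and the conjugate $f^k g f^{-k}$ acts on $\A(Y_k)$ loxodromically with translation length $\ell_Y(g)$. The translates $Y_k$ need not be pairwise disjoint --- condition~(2) forces $\partial_- Y_{k+1}$ to essentially cross $Y_k$ --- but the outward multicurves $t^n(\partial_+Y)$ and $t^{-n}(\partial_-Y)$ exit the ends by condition~(1), and this exiting property will control the bookkeeping between different $\A(Y_k)$'s via subsurface projection.

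\emph{Axis construction.} Condition~(2) ensures that $\pi_Y(t(\partial_-Y))$ and $\pi_Y(t^{-1}(\partial_+Y))$ are well-defined vertices in $\A(Y)$, and condition~(3) says that $\ell_Y(g)$ exceeds their $\A(Y)$-distance by at least~$61$. Let $\gamma$ be a $g$-quasi-axis in $\A(Y)$, and pick a fundamental segment $\sigma_0 \subset \gamma$ of length $\approx \ell_Y(g)$ whose initial vertex is near $\pi_Y(t^{-1}(\partial_+Y))$ and whose terminal vertex is $g$ applied to the initial vertex; by condition~(3) and the triangle inequality, this terminal vertex is close to $\pi_Y(t(\partial_-Y))$. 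Transport $\sigma_0$ to $\sigma_k \subset \A(Y_k)$ via $f^k$. Condition~(1) then lets us join the terminal vertex of $\sigma_k$ to the initial vertex of $\sigma_{k+1}$ by a short path in $\A(S)$, since both lie near multicurves that are disjoint from $Y_k$ and $Y_{k+1}$. Concatenating produces a bi-infinite path $\Gamma \subset \A(S)$ on which $f$ acts as a shift by one block.

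\emph{Quasi-geodesic verification.} The crux is to show $\Gamma$ is a quasi-geodesic in $\A(S)$. I would apply a bounded-geodesic-image / Behrstock-type argument using the witnesses $Y_k$: for $p, q \in \Gamma$ separated by $N$ blocks, projecting to an intermediate $\A(Y_k)$ exhibits a projection distance of at least $\ell_Y(g) - d_Y(t(\partial_-Y), t^{-1}(\partial_+Y)) \ge 61$. This should exceed the universal bounded-geodesic-image constant for the infinite-type arc graph, forcing any $\A(S)$-geodesic from $p$ to $q$ to witness each intermediate $Y_k$; aggregating over the $\sim N$ witnesses yields $d_{\A(S)}(p,q) \ge c N$. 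Since $f$ shifts $\Gamma$ by one block, $f$ is loxodromic. The main obstacle is this last step: verifying the bounded-geodesic-image estimate in the infinite-type arc-graph setting and controlling inter-block transitions. The constant~$61$ in~(3) is calibrated precisely so that per-block progress exceeds the ambient Behrstock / bounded-geodesic-image threshold.
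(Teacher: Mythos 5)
Your overall shape --- detect progress via large subsurface projections to the translates $f^k(Y)$ --- matches the paper's, but the step you yourself flag as ``the main obstacle'' is a genuine gap, and it is exactly the point the paper is organized to avoid. There is no known bounded-geodesic-image theorem for an arbitrary $\Mod(S)$--invariant arc graph of an infinite-type surface: the Masur--Minsky BGI constant depends on the hyperbolicity constant of the ambient complex, Webb's uniform version is specific to finite-type curve/arc complexes, and the graphs $\A(S)$ in play here need not even be hyperbolic (the two-ended omnipresent arc graph is not). The paper sidesteps BGI entirely: it proves a direct lower bound (Proposition~\ref{prop:lower_bound}) stating that for pairwise \emph{overlapping} witnesses $Z$ with $d_Z(\alpha,\beta)\ge 53$, one has $\sum_Z d_Z(\alpha,\beta)\le 5\,d_{\A(S)}(\alpha,\beta)$. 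The proof uses only the Behrstock inequality --- which Mangahas/Leininger establish with a universal constant independent of the ambient surface --- to assign disjoint ``active intervals'' of an $\A(S)$-geodesic to the subsurfaces. If you replace your BGI step with such a statement, you must also prove that the $f^i(Y)$ pairwise overlap (the paper's Lemma~\ref{lem:overlap}, via the Clay--Leininger--Mangahas order on $\Omega(\Lambda^-,\Lambda^+,K)$), which your proposal does not address.

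Two further problems. First, your concatenated path $\Gamma$ is built from vertices of the graphs $\A(Y_k)$, but these are generally \emph{not} vertices of $\A(S)$: in the omnipresent/grand arc graph, for instance, no arc or curve contained in a finite-type subsurface is a vertex. So $\Gamma$ is not a path in $\A(S)$, and the loxodromicity of $f$ must instead be read off from the orbit of a genuine vertex $\alpha\in\A(S)$. Producing such an $\alpha$ with controlled projections is nontrivial (a generic infinite-type arc can cross $\Lambda^{\pm}$ wildly); this is the content of the paper's Lemma~\ref{lem:nearby_exist}, which uses the hypothesis that $\A(S)$ contains a compact or non-separating vertex together with a surgery along a compact exhaustion. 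Second, your claim that the terminal vertex $gx$ of a fundamental segment starting at $x\approx\pi_Y(t^{-1}(\partial_+Y))$ is ``close to $\pi_Y(t(\partial_-Y))$'' is backwards: condition~(3) gives $d_Y(x,gx)\approx\ell_Y(g)\ge d_Y(t(\partial_-Y),t^{-1}(\partial_+Y))+61$, so $gx$ is at distance at least $61$ from $\pi_Y(t(\partial_-Y))$. The role of condition~(3) is to force the projections of $\Lambda^+$ and $\Lambda^-$ to $Y$ to be \emph{far apart} (the paper's Lemma~\ref{lem:dist_trans}), which is what makes each $f^i(Y)$ register large progress; it is not a gluing condition for axis segments.
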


Here, $d_Y$ denotes distance in the usual curve and arc graph $\A(Y)$ of $Y$ and 
$\ell_Y(g)$ denotes the (stable) translation length of $g$ on $\A(Y)$. Since $\ell_Y(g) > 0$ if and only if $g$ is pseudo-Anosov on $Y$, the theorem applies to any sufficiently large power of a pseudo-Anosov. The reader can check that the last item can be replaced with the inequality 
\begin{align}\tag{$\dagger$} \label{eq:3alt}
d_Y(t(\partial_- Y), t^{-1}(\partial_+ Y)) + 61 \le d_Y(t(\partial_- Y), gt(\partial_- Y)),
\end{align}
which does not require $g$ to be pseudo-Anosov (see the first paragraph in the proof of Theorem \ref{thm:intro_main}). 
For example, $g$ can be taken to be a power of a Dehn twist about a curve in $Y$ that is sufficiently far from $\pi_Y (t(\partial_- Y))$ in $\A(Y)$. That such endperiodic maps can be loxodromic was first observed in forthcoming work of Abbott, Miller, and the first author \cite{abbott2022dehn}. There it is shown that certain Dehn twists composed with a particularly simple endperiodic map (a standard shift map) act loxodromically on the \emph{relative arc graph} (see Section~\ref{sec:examples} for the definition of this graph). However, 
their results are disjoint from those in this note.

\smallskip
For examples of arc graphs that appear in the literature and for which our results apply, see Section \ref{sec:examples}. In particular, our construction includes the examples of \emph{strongly irreducible} (\cite[Definition 2.6]{field2021end}) endperiodic maps constructed by Field--Kim--Leininger--Loving \cite[Section 6.1]{field2021end}. 
Such maps are particularly important because they give rise to hyperbolic mapping tori whose convex core boundaries are totally geodesic \cite[Proposition 3.1]{field2021end}.

The motivation for this paper comes from one of the biggest open problems about infinite-type surfaces, which is to give a Nielsen--Thurston type classification of elements of the mapping class group. For finite-type $S$, the Nielsen--Thurston classification tells us that $f \in \Mod(S)$ is either periodic, reducible, or pseudo-Anosov. Moreover, the pseudo-Anosov elements are exactly those that act loxodromically on the curve graph $\mathcal{C}(S)$ \cite{MasurMinsky} and which give rise to hyperbolic mapping tori \cite{thurston1998hyperbolic}. The trichotomy of the Nielsen--Thurston classification does not exactly hold for infinite-type surfaces, so the goal is to understand what the analog of pseudo-Anosovs are in the infinite-type setting. One approach to this problem is to classify those elements of $\Mod(S)$ that act loxodromically on one of the relevant curve/arc graphs in this setting. For previous work on this topic, see \cite{bavard2016hyperbolicite, bavard2018gromov, abbott2021infinite, morales2020loxodromic, bar2021grand}.

In this paper, we give the first examples of endperiodic mapping classes acting loxodromically on the \emph{omnipresent arc graph} of Fanoni--Ghaswala--McLeay \cite{fanoni2020homeomorphic} and the \emph{grand arc graph} of Bar-Natan--Verberne \cite{bar2021grand}, which are the same graph when $S$ has finitely many ends, and on the separating curve graphs of Durham--Fanoni--Vlamis \cite{durham2018graphs}. 
A major novelty of our approach is that we give a very 
short proof that our elements act loxodromically on $\A(S)$, appealing to the lower bound on distance in $\A(S)$ given by Lemma~\ref{prop:lower_bound}. In this way, we avoid the ``starts like'' functions and the technical coding of arcs/curves on $S$ for lower bounds on distance as in \cite{bavard2016hyperbolicite, abbott2021infinite}.

\smallskip

\subsection*{Acknowledgments}
Patel was partially supported by NSF CAREER Grant DMS--2046889. Taylor was partially supported by NSF grant DMS--2102018 and the Sloan Foundation. The authors thank Federica Fanoni, Yvon Verberne, and Nicholas Vlamis for helpful discussions regarding Section~\ref{sec:examples} and Sanghoon Kwak, Tyrone Ghaswala, and Marissa Loving for comments on an earlier draft of the paper.

\section{Background and setup} 
\label{sec:background}
Throughout the paper $S$ is an infinite-type surface with finitely many ends, where $\partial S$ is a possibly empty collection of circles. The mapping class group $\Mod(S)$ of $S$ is the group of orientation-preserving homeomorphisms of $S$, up to isotopy. One could also restrict to the subgroup that permutes or fixes subsets of either $\partial S$ or the ends of $S$ without altering any results of the paper.

A homeomorphism $t \colon S \to S$ is \emph{end periodic} if each of its ends is either \emph{attracting} or \emph{repelling}. The end $e$ is \emph{attracting} if there is a $p\ge0$ and a neighborhood $U\subset S$ of $e$ such that $f^p(U) \subset U$ and $\bigcap_{k\ge 0}f^{kp}(U) = \emptyset$.
The end $e$ is \emph{repelling} if it is attracting for $t^{-1}$. See \cite{cantwell2021endperiodic} and \cite{field2021end} for details and examples.  
\smallskip

By a \emph{subsurface} $Z$ of $S$ we always mean a finite-type, essential subsurface. If $Z$ has genus $g$ and $p$ punctures/boundary components, then we further require that $3g+p-4\ge 0$. We next cover some basic material on subsurface projections to $Z$ as defined by Masur--Minsky \cite{MM2} and refer the reader there for more details. The reader will notice that we have ruled out the case where $Z$ is an annulus; this is because annuli do not satisfy the hypotheses of Theorem \ref{thm:intro_main}.

The arc graph $\A(Z)$ for $Z$ is the graph whose vertices are proper isotopy classes of essential simple closed curves (hereafter \emph{curves}) and essential proper arcs (hereafter \emph{arcs}) and whose edges correspond to vertices that can be realized disjointly in $Z$. 
There is a partially defined \emph{subsurface projection} $\pi_Z$ to $\A(Z)$ defined as follows. Let $\alpha$ be the isotopy class of a curve or arc. If $\alpha$ is disjoint from $Z$, or more precisely if it has a disjoint representative, then $\pi_Z(\alpha)$ is undefined. Otherwise, we say that $\alpha$ intersects $Z$ essentially. In this case, we realize $\alpha$ in minimal position with $\partial Z$ and take $\pi_Z(\alpha)$ to be the set of isotopy classes of arcs and curves in $\alpha \cap Z$. Observe that if $Z$ is a witness, then the map $\pi_Z \colon \A(S) \to \A(Z)$ is well-defined and $1$-Lipschitz in that sense that sets of diameter one are taken to sets of diameter one. Here and throughout, distance is taken with respect to the induced graph metric.

More generally, if $\alpha,\beta$ are curves or arcs on $S$ that intersect $Z$ essentially, then we use the notation
\[
d_Z(\alpha,\beta) : = \mathrm{diam}\left ( \pi_Z(\alpha) \bigcup \pi_Z(\beta) \right ).
\]
Of course, the above definitions also apply to \emph{multicurves} which are disjoint unions of essential simple closed curves. 

If $X$ and $Z$ are subsurfaces for which $\partial X$ intersects $Z$ essentially and $\partial Z$ intersects $X$ essentially, then $X$ and $Z$ are said to \emph{overlap}.

\smallskip
Finally, for any isometry $g$ of a metric space $X$, the \emph{(stable) translation length} of $g$ is given by
\[
\ell_X(g) = \lim_{k\to \infty} \frac{d_X(x,g^k x)}{k},
\]
for $x\in X$. This limit is well-defined and independent of $x$ by the triangle inequality. The isometry $g$ is said to be \emph{loxodromic} if $\ell_X(g) >0$.

\bigskip

Now returning to the setup of Theorem \ref{thm:intro_main}, define 
\[
\Lambda^+ = \bigcup_{i \in \Z}  f^i(\partial_-Y) \quad \text{and} \quad \Lambda^- = \bigcup_{i \in \Z}  f^i(\partial_+Y).
\]

\begin{lem}\label{lem:multicurve}
Both $\Lambda^+$ and $\Lambda^-$ are infinite multicurves of $S$. Moreover, $f^{-n}(\partial_-Y)$  exits the negative ends of $S$ and $f^{n}(\partial_+Y)$ exits the positive ends of $S$ as $n\to \infty$.
\end{lem}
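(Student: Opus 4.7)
The plan is to reduce iterates of $f = tg$ to iterates of $t$ and then exploit the disjointness built into hypothesis (1). The key observation is that $g \in \Mod(Y)$ is supported on $Y$, so $g$ fixes $\partial Y$ setwise and acts as the identity on any isotopy class of multicurve disjoint from $Y$.

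First I would prove by induction on $n \ge 0$ the identifications
\[
f^{-n}(\partial_- Y) = t^{-n}(\partial_- Y) \quad \text{and} \quad f^n(\partial_+ Y) = t^n(\partial_+ Y).
\]
For the inductive step on the first identity, write
\[
f^{-n}(\partial_- Y) = g^{-1} t^{-1}(f^{-(n-1)}(\partial_- Y)) = g^{-1}(t^{-n}(\partial_- Y));
\]
by hypothesis (1) the multicurve $t^{-n}(\partial_- Y)$ is disjoint from $Y$ for $n \ge 1$, so $g^{-1}$ fixes it. The base case $n=1$ just uses $g(\partial_- Y) = \partial_- Y$. The argument for $\partial_+ Y$ is symmetric. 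Together with hypothesis (1), these identifications immediately yield the ``moreover'' clause of the lemma.

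Next I would show that $\Lambda^+$ is an infinite multicurve; the argument for $\Lambda^-$ is symmetric. Since $f$ is a homeomorphism, it suffices to verify $\partial_- Y \cap f^k(\partial_- Y) = \emptyset$ for every $k \ne 0$. For $k \le -1$, the identification above gives $f^k(\partial_- Y) = t^k(\partial_- Y)$, which is disjoint from $Y \supset \partial_- Y$ by hypothesis (1). For $k \ge 1$, applying $f^{-k}$ reduces the assertion to $f^{-k}(\partial_- Y) \cap \partial_- Y = \emptyset$, which is again the previous case. Infinitude follows because $\{t^{-n}(\partial_- Y)\}_{n \ge 1}$ consists of pairwise distinct multicurves exiting the negative ends of $S$. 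I do not foresee any real obstacle: the lemma is essentially a bookkeeping exercise, and the only mild subtlety is tracking that ``disjoint'' is meant in the isotopy-class sense, which is already built into the hypotheses.
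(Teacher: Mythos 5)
Your proposal is correct and follows essentially the same route as the paper: establish $f^{\mp n}(\partial_\mp Y)=t^{\mp n}(\partial_\mp Y)$ by induction using that $g$ is supported on $Y$ together with hypothesis (1), then translate an arbitrary pair of components by a power of $f$ to reduce to the one-sided disjointness given by (1), with the ``moreover'' clause following from the exiting condition. The only cosmetic wrinkle is your base case remark: for $f^{-1}(\partial_- Y)$ one needs $g^{-1}$ to fix $t^{-1}(\partial_- Y)$ (which is disjoint from $Y$ by (1)), not $\partial_- Y$ itself, but your inductive step already covers this.
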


\begin{proof}
By condition $(1)$ in Theorem \ref{thm:intro_main}, $t^n(\partial_+ Y)$ and $t^{-n}(\partial_- Y)$ are disjoint from the interior of $Y$ for all $n\ge0$.
Now using the fact that $g$ restricts to the identity on $S\setminus \mathrm{int}(Y)$, we have by induction that 
\begin{align}\label{ends}
f^n(\partial_+  Y) = t^n(\partial_+ Y) \; \text{ and } \; f^{-n}(\partial_-  Y) = t^{-n}(\partial_- Y) \quad \text{ for all } n\ge 0. 
\end{align}
Again using condition $(1)$ in Theorem \ref{thm:intro_main} implies that $ \bigcup_{i \ge 0}  f^i(\partial_+Y)$ and $\bigcup_{i \le 0}  f^i(\partial_-Y)$ are infinite multicurves. But then any pair of curves in one of $\Lambda^+$ or $\Lambda^-$ differs from a pair in these multicurves by translation by $f$, showing that $\Lambda^+$ and $\Lambda^-$ are also infinite multicurves.

The moreover statement now follows from eq. \ref{ends} and the corresponding statement for $t$ in the hypotheses of Theorem \ref{thm:intro_main}.
\end{proof}

The definition (and notation) for the multicurves $\Lambda^\pm$ is inspired by the Handel--Miller laminations associated to an endperiodic map. This perspective, however, is not required here. See \cite{cantwell2021endperiodic, LMT22} for details.

\section{Tools from Masur--Minsky}
To prove Theorem \ref{thm:intro_main}, we import a few central tools from Masur--Minsky theory into the setting of infinite-type surfaces. 

\subsection{Behrstock inequality and subsurface ordering}
We first recall the Behrstock inequality. If $X$ and $Z$ are overlapping finite-type subsurfaces of a surface $S$, and $\alpha$ is an arc or curve that intersects both $X$ and $Z$ essentially, then 

\begin{align} \label{Behrstock}
\min\{d_X(\alpha,\partial Z), d_Z(\alpha, \partial X)  \} \le 10. 
\end{align}

The first proof of the Behrstock inequality was given by Behrstock in \cite{behrstock2006asymptotic}, where the number $10$ was replaced by a constant depending on the complexity of the ambient surface; hence, his proof does not apply in the current (infinite complexity) setting. However, an explicit and elementary proof was recorded by Mangahas in \cite[Lemma 2.13]{mangahas2013recipe} (and attributed to Leininger) where an even sharper inequality than eq. \ref{Behrstock} was established. This proof does not make reference to the ambient surface and applies directly in our setting. Alternatively, one can set $S'$ to be any (finite-type) subsurface of $S$ that contains both $X$ and $Y$ and apply Mangahas's result directly to $\pi_{S'}(\alpha)$. In either case, eq. \ref{Behrstock} holds in our setting. In fact, since we consider projections to the arc (rather than curve) graph, the number $10$ can be reduced further, but pursuing this would distract from the goal at hand.

\medskip

The Behrstock inequality is a fundamental tool in defining an ordering on subsurfaces `between' two fixed curve systems. This ordering was first explicitly defined in \cite{behrstock2012geometry}, but we follow the simplified exposition of \cite{clay2012geometry}.

Assuming the setup above, fix $K\ge 20$ and let $\Omega(\Lambda^-,\Lambda^+, K)$ be the set of finite-type subsurfaces $Z$ such that $d_Z(\Lambda^-,\Lambda^+) \ge K$. Define $X\prec Z$ $\iff$ $X$ and $Z$ overlap and $d_X( \Lambda^-, \partial Z) \ge 10$. Clay--Leininger--Mangahas prove that $\prec$ is a strict partial ordering on $\Omega(\Lambda^-,\Lambda^+, K)$ \cite[Corollary 3.7]{clay2012geometry} and that any two overlapping subsurfaces $X$ and $Z$ in $\Omega(\Lambda^-,\Lambda^+, K)$ are ordered \cite[Proposition 3.6]{clay2012geometry}.

\subsection{A lower bound on distance in $\A(S)$}

This section outlines the key tool, giving a lower bound on distance in the arc graph. The proof is modeled on the effective lower bound for the Masur--Minsky distance formula given in \cite{ATW} (see also \cite[Lemma 10.1]{taylor2015right}) and a more general construction in \cite{sisto2019largest}.

\begin{prop}
\label{prop:lower_bound}
Let $\alpha, \beta \in \A(S)$ and let
$\Omega^\pitchfork(\alpha,\beta, 53)$ be any set of pairwise overlapping witness subsurfaces $Z$ such that $d_Z(\alpha, \beta) \ge 53$. Then, 
\[
\sum_{Z \in\Omega^\pitchfork(\alpha,\beta, 53)} d_Z(\alpha,\beta)    \le 5 \cdot d_{\A(S)}(\alpha,\beta).
\]
\end{prop}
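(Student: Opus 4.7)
The plan is to follow the effective lower bound strategy for Masur--Minsky type distance formulas, as in \cite{ATW}, \cite{taylor2015right}, and \cite{sisto2019largest}. I fix a geodesic $\alpha = \gamma_0, \gamma_1, \dots, \gamma_N = \beta$ in $\A(S)$ with $N = d_{\A(S)}(\alpha,\beta)$. Since each $Z \in \Omega^\pitchfork(\alpha,\beta,53)$ is a witness, the projection $\pi_Z(\gamma_i)$ is defined at every vertex; moreover, consecutive $\gamma_i, \gamma_{i+1}$ are disjoint in $S$, so their projections to $Z$ are unions of pairwise disjoint arcs and curves, giving $d_Z(\gamma_i, \gamma_{i+1}) \le 1$. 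Chaining this yields $d_Z(\gamma_i, \gamma_j) \le |i-j|$ for all $i, j$.

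For each $Z \in \Omega^\pitchfork(\alpha,\beta,53)$, I define an \emph{active interval} $[a_Z, b_Z]$ by $a_Z = \max\{i : d_Z(\alpha, \gamma_i) \le 10\}$ and $b_Z = \min\{i : d_Z(\gamma_i, \beta) \le 10\}$. Since $d_Z(\alpha,\beta) \ge 53$ and $\pi_Z$ advances by at most one per step, the triangle inequality forces
\[
b_Z - a_Z \;\ge\; d_Z(\gamma_{a_Z}, \gamma_{b_Z}) \;\ge\; d_Z(\alpha,\beta) - 20 \;\ge\; 33.
\]

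The main technical step is a counting lemma: for each $i \in \{0, \dots, N\}$, the number of subsurfaces $Z \in \Omega^\pitchfork(\alpha,\beta,53)$ with $i \in [a_Z, b_Z]$ is at most three. This is where the pairwise overlapping hypothesis is essential, since it allows us to invoke the total ordering $\prec$ on $\Omega^\pitchfork(\alpha,\beta,53)$ from \cite{clay2012geometry}. If four active subsurfaces $Z_1 \prec Z_2 \prec Z_3 \prec Z_4$ shared such an index $i$, then $d_{Z_j}(\alpha, \gamma_i) > 10$ and $d_{Z_j}(\beta, \gamma_i) > 10$ for each $j$. The ordering yields $d_{Z_j}(\alpha, \partial Z_k) \ge 10$ for $j<k$ and, via the Behrstock inequality \eqref{Behrstock} applied to $\alpha$, its dual $d_{Z_k}(\alpha, \partial Z_j) \le 10$; a symmetric set of inequalities holds with $\beta$ in place of $\alpha$. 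Applying \eqref{Behrstock} once more to $\gamma_i$ with the middle pair $(Z_2, Z_3)$ then produces incompatible estimates on $\pi_{Z_2}(\gamma_i)$ or $\pi_{Z_3}(\gamma_i)$, the contradiction.

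Granting the counting lemma, double counting closes the argument:
\[
\sum_{Z \in \Omega^\pitchfork(\alpha,\beta,53)} \bigl(d_Z(\alpha,\beta) - 20\bigr) \;\le\; \sum_Z (b_Z - a_Z) \;=\; \sum_{i=0}^N \#\{Z : i \in [a_Z, b_Z]\} \;\le\; 3(N+1).
\]
Since $d_Z(\alpha,\beta) \ge 53$ implies $d_Z(\alpha,\beta) \le \tfrac{53}{33}\bigl(d_Z(\alpha,\beta) - 20\bigr)$, and since $\Omega^\pitchfork(\alpha,\beta,53)$ nonempty forces $N \ge 53$ via $d_Z(\alpha,\beta) \le N$, the right-hand side is bounded by $5N$, finishing the proof. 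The main obstacle is the counting lemma; the constants $53$ and $5$ in the statement appear calibrated precisely so that a multiplicity bound of three suffices.
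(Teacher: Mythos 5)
Your overall strategy (geodesic, ``active intervals,'' Behrstock, double counting) is the same as the paper's, but there is a genuine gap at your main technical step, and it is caused by your choice of threshold. You declare $Z$ active at index $i$ when $d_Z(\alpha,\gamma_i)>10$ and $d_Z(\gamma_i,\beta)>10$, i.e.\ with threshold equal to the Behrstock constant $B=10$. The Behrstock bootstrap then fails to produce a contradiction: from $Z_1\prec Z_2$ one gets $d_{Z_2}(\alpha,\partial Z_1)\le 10$, and if the Behrstock inequality applied to $\gamma_i$ returns $d_{Z_2}(\gamma_i,\partial Z_1)\le 10$, the triangle inequality only yields $d_{Z_2}(\alpha,\gamma_i)\le 20$ --- perfectly compatible with $d_{Z_2}(\alpha,\gamma_i)>10$. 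More generally, for any ordered pair $Z_j\prec Z_k$ the most Behrstock gives is ``$d_{Z_j}(\gamma_i,\beta)\le 20$ or $d_{Z_k}(\gamma_i,\alpha)\le 20$,'' and a configuration in which every active subsurface has $d_Z(\gamma_i,\alpha)\in(10,20]$ satisfies all of these constraints simultaneously; so no multiplicity bound (three or otherwise) follows from the sketched argument. The fix is exactly what the paper does: take the activity threshold to be $2B+1=21$, so that subtracting $B$ still leaves a quantity $>B$ and the Behrstock inequality can be applied again. With that threshold one gets the much stronger conclusion that the active intervals are pairwise \emph{disjoint} (multiplicity one), using only three applications of Behrstock and no appeal to the ordering $\prec$ at all.

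Note also that your constants are calibrated to the unprovable version. If you repair the threshold to $21$, the interval length bound becomes $b_Z-a_Z\ge d_Z(\alpha,\beta)-42$, and a multiplicity-three count would only give $\sum_Z d_Z(\alpha,\beta)\le \tfrac{53}{11}\cdot 3(N+1)\approx 14.5\,N$, which misses the stated constant $5$. The constant $53=5B+3$ is chosen so that disjointness of the intervals, together with $d_Z(\alpha,\beta)\le |b_Z-a_Z|+4B+2$ and $\tfrac15 d_Z(\alpha,\beta)\le d_Z(\alpha,\beta)-4B-2$, sums to $5\,d_{\A(S)}(\alpha,\beta)$. So you need the multiplicity-one statement, not a multiplicity-three one, and the route to it is the larger threshold.
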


\begin{proof}
Let $B=10$, set $K  = 5B+3$, and fix $\alpha, \beta \in \A(S)$ along with a geodesic $\alpha = \alpha_0, \alpha_1, \ldots, \alpha_N = \beta.$ For each $Y \in \Omega^\pitchfork(\alpha,\beta,K)$ choose $i_Y,t_Y \in \{0,\ldots,N\}$ as follows: $i_Y$ is the largest index $k$ with $d_Y(\alpha_0,\alpha_k) \le 2B+1$ and $t_Y$ is the smallest index $k$ greater than $i_Y$ with $d_Y(\alpha_k, \alpha_N) \le 2B+1.$  Write $I_Y = [i_Y, t_Y] \subset [0, N]$ and note that this subinterval is well-defined and that, since the projection of adjacent vertices in the geodesic have $d_Y$ less than or equal to $1$, $d_Y(\alpha_0,\alpha_{k}), d_Y(\alpha_{k},\alpha_N) \ge 2B+1$ for all $k \in I_Y$ and $d_Y(\alpha_{i_Y},\alpha_{t_Y}) \ge B+1$. 

The following claim implies that one cannot make large progress on distance simultaneously in overlapping subsurfaces.

\begin{claim}
If $Y,Z \in \Omega^\pitchfork(\alpha,\beta,K)$, then $I_Y \cap I_Z  = \emptyset $.
\end{claim}

\begin{proof}[Proof of claim]
Toward a contradiction, take $k \in I_Y \cap I_Z$. Since $Y$ and $Z$ overlap, the Behrstock inequality implies that either $d_Y(\alpha_0, \partial Z) \le B$ or $d_Z(\alpha_0, \partial Y) \le B$. Assume the former; the latter case is proven by exchanging the occurrences of $Y$ and $Z$ in the proof. By the triangle inequality, 
\begin{eqnarray*}
d_Y(\partial Z, \alpha_k) &\ge& d_Y(\alpha_0,\alpha_k) - d_Y(\alpha_0, \partial Z) \\
&\ge& 2B+1 - B \ge B+1.
\end{eqnarray*}

The Behrstock inequality applied again tells us that $d_Z(\partial Y,\alpha_k) \le B$. Therefore, 
\begin{eqnarray*}
d_Z(\partial Y, \alpha_N) &\ge& d_Z(\alpha_k,\alpha_N) - d_Z(\partial Y,\alpha_k) \\
&\ge& 2B+1 -B \ge B+1.
\end{eqnarray*}
The Behrstock inequality applied one last time tells us that $d_Y(\partial Z, \alpha_N) \le B$. This, together with our initial assumption, implies
$$d_Y(\alpha_0,\alpha_N) \le d_Y(\alpha_0, \partial Z) + d_Y(\partial Z, \alpha_N) \le 2B < K$$
contradicting the fact that $Y \in \Omega^\pitchfork(\alpha,\beta,K)$.
\end{proof}

Returning to the proof of the proposition, since $I_Y$, $I_Z \subset [0, N]$ do not overlap for $Y \neq Z$, $N =  d_{\A(S)}(\alpha, \beta) $ is at least the sum of the length of these intervals. That is,
\[
\sum_{Y \in \Omega^\pitchfork (\alpha,\beta,K)} | t_Y - i_Y | \le d_{\A(S)}(\alpha, \beta) .
\]

Finally, using that the subsurface projections are $1$--Lipschitz,
\begin{eqnarray*}
d_Y(\alpha,\beta) &\le& d_Y(\alpha_{i_Y}, \alpha_{t_Y}) + 4B +2 \\
&\le& |t_Y - i_Y| +4B +2.
\end{eqnarray*}
Since, for each $Y \in \Omega^\pitchfork(\alpha,\beta,K)$, $d_Y(\alpha,\beta) \ge 5B+3$ we have $$\frac{1}{5} \cdot d_Y(\alpha,\beta) \le d_Y(\alpha,\beta) - 4B - 2 \le |t_Y - i_Y|$$ and so putting this together with the inequality above gives
$$\sum_{Y \in \Omega^\pitchfork(\alpha,\beta,K)} d_Y(\alpha,\beta) \le 5 \cdot d_{\A(S)}(\alpha,\beta) $$
as required.
\end{proof}

\section{Proof of Theorem~\ref{thm:intro_main}} 

Recall that $\Lambda^+ = \bigcup_{i \in \Z}  f^i(\partial_-Y)$ and $\Lambda^- = \bigcup_{i \in \Z}  f^i(\partial_+Y)$.  These multicurves are invariant under $f$ by definition. The theorem follows from three more lemmas, the first of which relates the displacement of $g$ to the quantity $d_Y(\Lambda^+,\Lambda^-)$.

\begin{lem}\label{lem:dist_trans}
Assume items (1) and (2) of Theorem~\ref{thm:intro_main}. Then for any $g \in \Mod(Y),$
\[
\vert d_Y\left (t(\partial_- Y), g^{-1}t(\partial_- Y)\right) - d_Y(\Lambda^+,\Lambda^-) \vert  \le  d_Y(t(\partial_-Y), t^{-1}(\partial_+ Y)) + 2.
\]
 \end{lem}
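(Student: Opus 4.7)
The plan is to compute $d_Y(\Lambda^+, \Lambda^-)$ up to bounded additive error in two moves: first replace $\Lambda^{\pm}$ by a single distinguished multicurve each whose $\pi_Y$-image lies inside and use a diameter bound on the two projections, then use the triangle inequality in $\A(Y)$ to pass from $d_Y(\gamma, g^{-1}\delta)$ to $d_Y(\gamma, g^{-1}\gamma)$. Here and below I write $\gamma := t(\partial_- Y)$ and $\delta := t^{-1}(\partial_+ Y)$.

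The key observation, which drives everything, is that by Lemma~\ref{lem:multicurve} both $\Lambda^+$ and $\Lambda^-$ are multicurves; hence each of $\pi_Y(\Lambda^+)$ and $\pi_Y(\Lambda^-)$ is a family of pairwise disjoint arc/curve isotopy classes in $Y$, so each has $\A(Y)$-diameter at most $1$. Next I will identify the distinguished representatives. Since $g \in \Mod(Y)$ fixes $\partial Y$ pointwise, $f(\partial_- Y) = tg(\partial_- Y) = \gamma$, giving $\gamma \in \Lambda^+$; likewise $f^{-1}(\partial_+ Y) = g^{-1}t^{-1}(\partial_+ Y) = g^{-1}\delta \in \Lambda^-$. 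By hypothesis $(2)$ both $\gamma$ and $\delta$ intersect $Y$ essentially, and $g$ preserving $Y$ setwise lets me commute $\pi_Y$ with $g^{-1}$, so the nonempty sets $\pi_Y(\gamma) \subseteq \pi_Y(\Lambda^+)$ and $\pi_Y(g^{-1}\delta) = g^{-1}\pi_Y(\delta) \subseteq \pi_Y(\Lambda^-)$ witness the correct diameters. Combining these containments with the diameter-one bound, a short triangle-inequality argument yields
\[
|d_Y(\Lambda^+, \Lambda^-) - d_Y(\gamma, g^{-1}\delta)| \le 2.
\]

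To finish, I use that $g^{-1}$ is an isometry of $\A(Y)$, which gives $d_Y(g^{-1}\gamma, g^{-1}\delta) = d_Y(\gamma, \delta)$. The triangle inequality for $d_Y$ (through the intermediate vertex $g^{-1}\gamma$, whose $\pi_Y$-image is nonempty since $\gamma$ intersects $Y$ essentially) then produces
\[
|d_Y(\gamma, g^{-1}\delta) - d_Y(\gamma, g^{-1}\gamma)| \le d_Y(\gamma, \delta),
\]
and adding this to the previous estimate produces exactly the claimed bound. The only nonroutine ingredient is the multicurve observation from the first paragraph; once that is in hand, everything else is bookkeeping that relies only on $g$ being supported on $Y$ (and hence preserving $Y$ setwise and $\partial Y$ pointwise). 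I do not anticipate any substantial obstacle.
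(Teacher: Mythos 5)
Your proof is correct and follows essentially the same route as the paper: both arguments rest on the identifications $t(\partial_-Y)=f(\partial_-Y)\in\Lambda^+$ and $g^{-1}t^{-1}(\partial_+Y)=f^{-1}(\partial_+Y)\in\Lambda^-$, the diameter-one bound on $\pi_Y$ of a multicurve, the fact that $g$ acts as an isometry commuting with $\pi_Y$, and the triangle inequality. Your reorganization (first trading $\Lambda^\pm$ for representatives at a cost of $2$, then a triangle inequality through $g^{-1}t(\partial_-Y)$) is only a cosmetic repackaging of the paper's chained estimate.
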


\begin{proof}
 The triangle inequality gives  $$d_Y(t(\partial_- Y), g^{-1}t(\partial_- Y)) \leq d_Y(t(\partial_- Y), \Lambda^+) + d_Y(\Lambda^+,\Lambda^-) + d_Y(g^{-1}t(\partial_- Y), \Lambda^-).$$ First, $g$ acts trivially on $\partial_-Y$, so that $t(\partial_- Y) = f(\partial_- Y)$. Since $f(\partial_- Y)$ is contained in $\Lambda^+$, $d_Y(t(\partial_- Y), \Lambda^+)\le1$. 
 
 Second, 
$$d_Y(g^{-1}t(\partial_- Y), \Lambda^-) \leq d_Y(g^{-1}t(\partial_- Y), g^{-1}t^{-1}(\partial_+Y)) + d_Y(g^{-1}t^{-1}(\partial_+ Y), \Lambda^-).$$ However, $g^{-1}t^{-1}(\partial_+ Y) = f^{-1}(\partial_+ Y)$ which is in $\Lambda^-$. Hence, $d_Y(g^{-1}t^{-1}(\partial_+ Y), \Lambda^-) \le 1.$
Since $$d_Y(g^{-1}t(\partial_- Y), g^{-1}t^{-1}(\partial_+Y))= d_Y(t(\partial_-Y), t^{-1}(\partial_+ Y)),$$ we obtain
$$d_Y(g^{-1}t(\partial_- Y), \Lambda^-) \leq d_Y(t(\partial_-Y), t^{-1}(\partial_+ Y)) + 1.$$ Combining these two facts concludes the lemma. 
  \end{proof}

The next lemma states that no power of $f$ translates $Y$ off itself.

\begin{lem}\label{lem:overlap}
With $S$ and $Y$ as in the statement of Theorem~\ref{thm:intro_main}, assume that 
\[
d_Y(t(\partial_- Y), t^{-1}(\partial_+ Y)) + 22 \le d_Y(t(\partial_- Y), g^{-1}t(\partial_- Y)).
\]
Then $f^i(Y)$ overlaps $f^j(Y)$ for all $i \neq j$. 
\end{lem}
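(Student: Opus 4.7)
The proof will proceed by contradiction. Since ``overlap'' is a symmetric relation preserved by homeomorphisms, conjugating by $f^{-i}$ reduces the claim to showing that $f^n(Y)$ overlaps $Y$ for every $n\neq 0$; by the $\pm$-symmetry of the hypotheses, we may assume $n\geq 1$.

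My strategy rests on two pillars. First, combining Lemma~\ref{lem:dist_trans} with the assumed inequality yields
\[
d_Y(\Lambda^+,\Lambda^-)\;\geq\; d_Y\bigl(t(\partial_-Y), g^{-1}t(\partial_-Y)\bigr) - d_Y\bigl(t(\partial_-Y), t^{-1}(\partial_+Y)\bigr) - 2 \;\geq\; 20;
\]
since $\Lambda^\pm$ are $f$-invariant and $f^n$ induces an isometry between the arc graphs of $Y$ and $f^n(Y)$, this also gives $d_{f^n(Y)}(\Lambda^+,\Lambda^-)\geq 20$. Second, the map $f=tg$ preserves the partition of ends of $S$ into attracting (positive) and repelling (negative) ends, because $g$ is supported on the compact subsurface $Y$ and hence fixes every end, while $t$ is endperiodic and so preserves the attracting/repelling classification (a direct consequence of the definition). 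In particular, $f^n(\partial_\pm Y)$ still cobounds $\pm$-ends for every $n$.

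Assume for contradiction that $f^n(Y)$ does not overlap $Y$. Since $f^n(Y)\cong Y$ has the same Euler characteristic as $Y$, any proper nesting forces the complement to be a disjoint union of annuli, so nesting collapses to isotopy. Hence there are two configurations to rule out.

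\emph{Case 1: $f^n(Y)$ is isotopic to $Y$.} Using end-type preservation, $f^n(\partial_-Y)$ is isotopic to $\partial_-Y$ as a multicurve, so the $f$-orbit of $\partial_-Y$ is periodic of period dividing $n$. Hence $\Lambda^+=\bigcup_{i\in\Z}f^i(\partial_-Y)$ has only finitely many distinct isotopy classes of components, contradicting the infinite-multicurve statement of Lemma~\ref{lem:multicurve}.

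\emph{Case 2: $f^n(Y)$ and $Y$ have disjoint interiors.} Then $f^n(Y)$ is contained in a single component $C$ of $S\setminus\intr(Y)$, which (since $Y$ separates the ends of $S$) is either a neighborhood of a single end of $S$ or a compact finite-type region. By end-type preservation, $\partial f^n(Y)$ contains components cobounding positive ends \emph{and} components cobounding negative ends; generically this cannot all fit into a single component $C$ of the prescribed type. In degenerate configurations where the end-type obstruction alone is insufficient (for example, when $S$ has very few ends), the contradiction is instead extracted from the projection estimate of the first pillar: the simultaneous bounds $d_Y(\Lambda^+,\Lambda^-)\geq 20$ and $d_{f^n(Y)}(\Lambda^+,\Lambda^-)\geq 20$, combined with $Y\cap f^n(Y)=\emptyset$, will force a contradiction via a comparison of the projections of $\Lambda^\pm$ to $Y$ and $f^n(Y)$.

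The main obstacle is the disjoint case. Topology alone is insufficient in some degenerate configurations, so one must set up a subsurface-projection argument \emph{without} the Behrstock inequality (which requires overlap and is therefore unavailable between $Y$ and $f^n(Y)$ here). Making this projection comparison go through, by tracking which components of $\Lambda^\pm$ project non-trivially to each of $Y$ and $f^n(Y)$ and exploiting the $f$-invariance of $\Lambda^\pm$, is the technical heart of the argument.
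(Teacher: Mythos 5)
Your reduction to comparing $f^n(Y)$ with $Y$, the derivation of $d_Y(\Lambda^+,\Lambda^-)\ge 20$ from Lemma~\ref{lem:dist_trans}, and the elimination of the nested/isotopic case are reasonable, but the proof has a genuine gap exactly where you locate ``the technical heart'': the case $Y\cap f^n(Y)=\emptyset$ is never actually ruled out, and the fallback you sketch for it does not exist. There is no principle forcing a contradiction from ``$d_Y(\Lambda^+,\Lambda^-)\ge 20$, $d_{f^n(Y)}(\Lambda^+,\Lambda^-)\ge 20$, and $Y\cap f^n(Y)=\emptyset$'': two disjoint subsurfaces can simultaneously carry arbitrarily large projection distances between the same pair of multicurves (this is the standard picture of many disjoint domains lying ``between'' two curve systems), and the Behrstock inequality---the only tool that plays projections to two subsurfaces against each other---requires overlap, as you yourself note. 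Your end-type obstruction also fails precisely in the paper's basic examples: when $S$ has two ends (the ladder surface), $\partial f^n(Y)$ consists of one positive and one negative multicurve and fits into a single end-neighborhood component of $S\setminus Y$; indeed for a bare handleshift $h$ of large translation length, $h^n(Y)\cap Y=\emptyset$ genuinely occurs. What rules this out for $f=tg$ is hypothesis (2) of Theorem~\ref{thm:intro_main}, which your argument never invokes: $f(\partial_-Y)=t(\partial_-Y)$ and $f^{-1}(\partial_+Y)=g^{-1}t^{-1}(\partial_+Y)$ both meet $Y$ essentially, so $Y$ and $f(Y)$ overlap. The lemma is false without (2), so no argument that omits it can close this case.

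The remaining issue---propagating overlap from consecutive translates to all pairs $i\ne j$---is handled in the paper not by contradiction but by the Clay--Leininger--Mangahas partial order: since $f(\partial_-Y)\subset\Lambda^+$ meets $Y$ essentially, one gets $d_Y(f(\partial Y),\Lambda^-)\ge 20-1\ge 10$, i.e.\ $Y\prec f(Y)$ in $\Omega(\Lambda^-,\Lambda^+,20)$; the $f$-invariance of $\Lambda^\pm$ then gives $f^i(Y)\prec f^{i+1}(Y)$ for all $i$, and transitivity of $\prec$ \cite{clay2012geometry} yields $f^i(Y)\prec f^j(Y)$ for all $i<j$, which contains the overlap statement by definition of the order. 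If you want to salvage your outline, replace the speculative projection comparison in Case 2 by the use of hypothesis (2) for the base case and by this order-theoretic step for the propagation.
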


\begin{proof}
Lemma \ref{lem:dist_trans} together with the assumption implies that
\begin{align}\label{eq:d_big}
d_Y(\Lambda^-,\Lambda^+) &\ge d_Y(t(\partial_- Y), g^{-1}t(\partial_- Y)) -d_Y(t(\partial_- Y), t^{-1}(\partial_+ Y)) -2 \\
& \ge 20, \nonumber
\end{align}
and so $Y \in \Omega(\Lambda^-,\Lambda^+, 20)$. Since $\Lambda^\pm$ are $f$--invariant, we also have that $f^i(Y) \in  \Omega(\Lambda^-,\Lambda^+, 20)$ for all $i \in \mathbb{Z}$.

Note that $t(\partial_- Y) \subset t(\partial Y) = f(\partial Y)$ which intersects $Y$ essentially by condition $(2)$ of Theorem \ref{thm:intro_main}. Additionally, $t(\partial_-Y) = f(\partial_-Y)$ which is in $\Lambda^+$. Hence, $d_Y(f(\partial Y), \Lambda^+) \le 1$ and so $d_Y(f(\partial Y), \Lambda^-) \ge 20 - 1 \ge10$. This implies that $Y \prec f(Y)$. But then since $\Lambda^\pm$ are $f$--invariant, $f^i(Y) \prec f^{i+1}(Y)$ for all $i \in \mathbb{Z}$. Applying the fact that the partial order $\prec$ is transitive we see that $f^i(Y) \prec f^j(Y)$ for all $i<j$. This, in particular, means that $f^i(Y)$ and $f^j(Y)$ overlap for all $i\ne j$, completing the proof.
\end{proof}

The final lemma states, in particular, that for $k\gg 1$, $f^k(\alpha)$ is eventually close to $\Lambda^+$ and $f^{-k}(\alpha)$ is eventually close to $\Lambda^-$ as seen from the perspective of $Y$, at least for a specially chosen $\alpha \in \A(S)$.

\begin{lem}
\label{lem:nearby_exist}
For any $Y$ and $g$ as above, there is an $\alpha \in \A(S)$ and $k_0\ge0$
so that for any $k\ge k_0$,
\[
d_Y(f^k(\alpha), \Lambda^+) \le 3 \text{ and } d_Y(f^{-k}(\alpha), \Lambda^-) \le 3.
\]
\end{lem}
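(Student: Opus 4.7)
The plan is to exploit the key fact that subsurface projections of two disjoint multicurves to a common witness consist of disjoint arcs/curves and therefore have diameter at most $1$ in the arc graph of the witness. I would pick $\alpha \in \A(S)$ that is disjoint in $S$ from a fixed component $\gamma^+$ of $f(\partial_-Y) = t(\partial_-Y)$ which crosses $Y$ essentially (such a component exists by condition~$(2)$ of Theorem~\ref{thm:intro_main}), and symmetrically disjoint from a component $\gamma^-$ of $f^{-1}(\partial_+Y) = t^{-1}(\partial_+Y)$ crossing $Y$. The hypothesis that $\A(S)$ contains a compact or non-separating vertex, combined with the $\Mod(S)$-invariance of $\A(S)$, provides enough flexibility to produce such an $\alpha$: vertices of these topological types have large $\Mod(S)$-orbits, and (for instance, by taking $\alpha$ to be a non-separating proper arc between a repelling and an attracting end fixed by $f$) one can arrange that $f^k(\alpha) \cap Y \neq \emptyset$ for all $k$ while avoiding $\gamma^\pm$ in $S$.

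Given such $\alpha$, applying the homeomorphism $f^k$ preserves disjointness, so $f^k(\alpha)$ is disjoint in $S$ from $f^k(\gamma^+) \in \Lambda^+$. When $f^k(\gamma^+)$ also intersects $Y$ essentially, the projections $\pi_Y(f^k(\alpha))$ and $\pi_Y(f^k(\gamma^+))$ are families of pairwise disjoint arcs/curves in $Y$, so $d_Y(f^k(\alpha), f^k(\gamma^+)) \le 1$. Moreover, since $\Lambda^+$ is a multicurve, all of its components project to mutually disjoint arcs/curves in $Y$, giving $\mathrm{diam}(\pi_Y(\Lambda^+)) \le 1$ in $\A(Y)$. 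The triangle inequality then yields
\[
d_Y(f^k(\alpha), \Lambda^+) \;\le\; d_Y(f^k(\alpha), f^k(\gamma^+)) + \mathrm{diam}(\pi_Y(\Lambda^+)) \;\le\; 2 \;\le\; 3,
\]
and the symmetric argument applied to $f^{-1}$ and $\gamma^-$ gives $d_Y(f^{-k}(\alpha), \Lambda^-) \le 3$.

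The main obstacle I anticipate is confirming that $f^k(\gamma^+)$, or failing that some other component of $\Lambda^+$ which crosses $Y$ and is disjoint from $f^k(\alpha)$, actually intersects $Y$ essentially for all sufficiently large $k$. Forward iterates of $\gamma^+$ may well be pushed toward the positive ends of $S$ and eventually miss $Y$; in that case one must substitute another crossing component of $\Lambda^+$. The tools of Section~\ref{sec:background} --- in particular Lemma~\ref{lem:overlap}, which guarantees that every $f^i(Y)$ overlaps $Y$ and hence contributes boundary components of $\Lambda^+$ crossing $Y$ --- together with the observation that $f^k(\alpha)$ has compact intersection with $Y$ and so meets only finitely many components of $\Lambda^+$ in $Y$, should allow for such a choice of replacement uniformly in $k$. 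Organizing this substitution cleanly, and verifying that the chosen non-separating (or compact) $\alpha$ genuinely misses $\gamma^\pm$ while still producing intersections with $Y$ across all iterates, is where I expect the bulk of the technical care to lie.
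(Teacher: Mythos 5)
Your overall strategy --- bound $d_Y(f^k(\alpha),\Lambda^+)$ by exhibiting a component of $\Lambda^+$ that crosses $Y$ essentially and is disjoint from (or has bounded intersection with) $f^k(\alpha)$ --- matches the paper's, but the way you set it up has a gap that your own ``main obstacle'' paragraph correctly identifies and that your proposed workaround does not close. You fix a single component $\gamma^+$ of $f(\partial_-Y)$ in advance and arrange $\alpha$ disjoint from it; applying $f^k$ then only tells you that $f^k(\alpha)$ is disjoint from $f^k(\gamma^+)$, a curve that moves with $k$ and will in general fail to meet $Y$ for large $k$, so $\pi_Y(f^k(\gamma^+))$ is undefined and the disjointness gives nothing. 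The substitution you suggest does not repair this: while only finitely many components of the multicurve $\Lambda^+$ can cross the compact surface $Y$, nothing in your construction makes $f^k(\alpha)$ disjoint from, or have bounded intersection with, any of them --- the only disjointness you control is with the single orbit of $\gamma^+$. (Also, Lemma~\ref{lem:overlap} is not available here: Lemma~\ref{lem:nearby_exist} is stated for any $g$, without the displacement hypothesis, and overlapping of $f^i(Y)$ with $Y$ concerns $\partial f^i(Y)$, not the components $f^i(\partial_-Y)$ of $\Lambda^+$; and even a component of $\Lambda^+$ crossing $Y$ need not avoid $f^k(\alpha)$.)

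The fix is to reverse which object is allowed to move with $k$. The paper arranges that $\alpha$ has intersection number at most $2$ with \emph{every} component of the bi-infinite multicurve $m=\bigcup_{i\le 0}f^i(\partial_-Y)\cup\bigcup_{i\ge 0}f^i(\partial_+Y)$ simultaneously (via a compact exhaustion $K_i$ with $\partial K_i\subset m$, building $\alpha$ from rays crossing each component of $m$ at most once, and using the classification of surfaces to put $\alpha$ in the $\Mod(S)$--orbit of a non-separating vertex so that $\alpha\in\A(S)$; if $\A(S)$ has a compact vertex one instead uses that such an $\alpha$ is eventually disjoint from $f^{-k+1}(\partial_-Y)$, which exits the ends). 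Then $f^k(\alpha)$ has intersection at most $2$ with each component of the \emph{fixed} multicurve $f^k\bigl(f^{-k+1}(\partial_-Y)\bigr)=f(\partial_-Y)=t(\partial_-Y)$, which crosses $Y$ essentially for every $k$ by hypothesis (2) of Theorem~\ref{thm:intro_main}, and the estimate $d_Y(f^k(\alpha),\Lambda^+)\le 3$ follows. Constructing an $\alpha\in\A(S)$ with this uniform control over all of $m$ is the real content of the lemma and is missing from your proposal.
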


\begin{proof}
First suppose that there is an $\alpha \in \A(S)$ that is contained in a finite-type subsurface. Then there exists $k_1$ so that for all $k\ge k_1$, $f^{-k+1}(\partial_-Y)$ is disjoint from $\alpha$. This is due to the fact (see eq. \ref{ends}) that $f^{-n }(\partial_-Y) = t^{-n }(\partial_- Y)$ escapes $S$ as $n \to \infty$. Applying $f^k$ we have that $f(\partial_- Y)$ is disjoint from $f^k(\alpha)$ and each of these intersect $Y$. Since $f(\partial_-Y)$ is contained in $\Lambda^+$, $d_Y(\Lambda^+, f^k(\alpha)) \le 2$ as required. Similarly, there exists $k_2$ such that for all $k \ge k_2$, $f^{k+1}(\partial_+Y)$ is disjoint from $\alpha$. Therefore, $d_Y(\Lambda^-, f^{-k}(\alpha)) \le 2$. Letting $k_0 = \max \{k_1, k_2\}$ proves the lemma in this case.

Otherwise, let $\beta \in \A(S)$ be an (isotopy class of an) arc that is not contained in a finite-type subsurface. By the hypothesis on $\A(S)$, we may assume that $\beta$ is non-separating.
\begin{claim}
There is an $\alpha \in \A(S)$ joining the same ends as $\beta$ (at most one of which is at a boundary component of $S$) with the property that $\alpha$
intersects each connected component of the infinite multicurve 
\[
m = \bigcup_{i \le 0}  f^i(\partial_-Y)\quad \mathlarger{\bigcup} \quad \bigcup_{i \ge 0}  f^i(\partial_+Y)
\]
no more than twice.
\end{claim}
Note that the fact that $m$ is a multicurve comes immediately from eq. \ref{ends} and condition $(1)$ in Theorem \ref{thm:intro_main}.

\begin{proof}[Proof of claim]
We begin by defining a compact exhaustion by subsurfaces $K_0 \subset K_1 \subset K_2 \ldots$ of $S$ by setting  $K_i$ to be the closure of the (unique) bounded component of $S \setminus \left (f^i(\partial_+Y) \cup f^{-i}(\partial_-Y) \right )$. In particular, $K_0 = Y$, $K_{i} \subseteq K_{i+1}$, and $m = \bigcup_{i\ge0} \partial K_i$. Using the exhaustion $\{K_i\}_{i\ge0}$, we see that for any end $e$ of $S$ and any component of $\partial_\pm Y$ (or more generally, component of $m$) there is a simple ray starting at that component, exiting the end $e$, and crossing no component of $m$ more than once.

If $\beta$ is a properly embedded arc whose ends exit the ends $e_1, e_2$ of $S$ (where possibly $e_1=e_2$),
take two rays $r_1$ and $r_2$ starting on $\partial Y$, intersecting each component of $m$ at most once, and exiting $e_1$ and $e_2$, respectively. Then join $r_1$ and $r_2$ via an essential simple arc in $Y$ to produce a new proper arc $\alpha$. In the case where $\beta$ goes to and from the same end, i.e. $e_1 = e_2$, further assume that the rays $r_1$ and $r_2$ are disjoint, and join them via an essential simple arc in $Y$. In general, this could produce a separating arc $\alpha$, but $r_2$ can be chosen to guarantee that $\alpha$ is non-separating.

When $\beta$ has one endpoint on a boundary component of $S$, that boundary component is contained in $K_i$ for some $i$. Then a similar argument produces a proper arc $\alpha$ joining the same ends as $\beta$ that meets no component of $m$ more than once.

Since $\beta$ and $\alpha$ are non-separating and start and end at the same ends of $S$, they are in the same $\map(S)$--orbit and hence $\alpha \in \A(S)$. In more detail, $\overline{S\setminus \beta}$ and $\overline{S \setminus \alpha}$ are homeomorphic by the classification of surfaces with non-compact boundary (see \cite[Theorem 2.2]{brown1979classification} and \cite[Theorem 3.3]{dickmann2021mapping}) and the homeomorphism extends to one bringing $\beta$ to $\alpha$, just as is the case with non-separating simple closed curves.
\end{proof}

Finally, fix $k\ge 1$. Since $\alpha$ intersects each component of $f^{-k+1}(\partial_- Y)$ and $f^{k-1}(\partial_+ Y)$ at most twice, 
we must have that $f^k(\alpha)$ intersects each component of $f(\partial_-Y)$ at most twice and $f^{-k}(\alpha)$ intersects each component of $f^{-1}(\partial_+ Y)$ at most twice. Since $f(\partial_-Y) = t(\partial_-Y)$ and $f^{-1}(\partial_+Y) = g^{-1}(t^{-1}(\partial_+ Y))$, and $t(\partial_-Y)$, $t^{-1}(\partial_+Y)$ meet $Y$ essentially by assumption, each of the curves $f(\partial_- Y)$ and $f^{-1}(\partial_+ Y)$
meets $Y$ essentially. Hence,
\[
d_Y(f^k(\alpha), f(\partial_-Y)) \le 2 \text{ and } d_Y(f^{-k}(\alpha), f^{-1}(\partial_+ Y)) \le 2.
\] 
Here we have used the elementary fact that arcs in $\A(Y)$ that intersect at most twice have distance at most $2$. 
Now the proof follows from the fact that $f(\partial_-Y)$ is contained in $\Lambda^+$ and $f^{-1}(\partial_+ Y)$ is contained in $\Lambda^-$.
\end{proof}

With these lemmas proven, we can turn to the 
\begin{proof}[Proof of Theorem \ref{thm:intro_main}]
Let $\alpha \in \A(S)$ be as in Lemma \ref{lem:nearby_exist}. Since translation length satisfies $\ell_Y(g) = \ell_Y(g^{-1}) \le d_Y(t(\partial_- Y), g^{-1}(t(\partial_- Y)))$, condition $(3)$ implies that 
\[
d_Y(t(\partial_- Y), t^{-1}(\partial_+ Y)) + 61 \le d_Y(t(\partial_- Y), g^{-1}(t(\partial_- Y))),
\]
 which we had previously called eq. \ref{eq:3alt}.
 So by Lemma \ref{lem:overlap}, $f^i(Y)$ overlaps $f^j(Y)$ for $i\neq j$. As in eq. \ref{eq:d_big}, Lemma \ref{lem:dist_trans} implies that $d_Y(\Lambda^-,\Lambda^+) \ge 61-2 = 59$. 

By Lemma \ref{lem:nearby_exist}, there is a $k_0\ge0$ so that for all $k\ge k_0$,
\[
d_Y(f^k(\alpha), \Lambda^+) \le 3 \text{ and } d_Y(f^{-k}(\alpha), \Lambda^-) \le 3.
\]
Then if we fix $k\gg 1$, we have that for all $i$ in the range $-k +k_0\le i\le k-k_0$
\begin{align*}
d_{f^iY} (f^{-k}(\alpha), f^{k}(\alpha))&= d_Y (f^{-k-i}(\alpha), f^{k-i}(\alpha)) \\
&\ge d_Y(\Lambda^-,\Lambda^+) -6\\
&\ge 53.
\end{align*}

We conclude that for $-k +k_0\le i\le k-k_0$, $\{f^i(Y)\}$ is a set of pairwise overlapping witness subsurfaces
$\Omega^\pitchfork(f^{-k}(\alpha), f^k(\alpha),53)$, and so by Proposition \ref{prop:lower_bound} we have
\begin{align*}
5 \; d_{\A(S)}(f^{-k}(\alpha), f^k(\alpha)) &\ge \sum_{-k +k_0\le i\le k-k_0} d_{f^iY} (f^{-k}(\alpha), f^{k}(\alpha))  \\
&\ge  \sum_{-k +k_0\le i\le k-k_0} (d_Y(\Lambda^-,\Lambda^+) -6) \\
&\ge 2(k-k_0) \cdot  (d_Y(\Lambda^-,\Lambda^+) -6).
\end{align*}
Dividing both sides by $k$ and taking the limit as $k \to \infty$ shows that $\ell_{\A(S)}(f) >0$ and completes the proof.
\end{proof}

We remark that the proof establishes that for $Y \subset S$ satisfying conditions $(1)$ and $(2)$ in the statement of Theorem \ref{thm:intro_main}, the translation length $\ell_{\A(S)}(f)$ is bounded below by a linear function of $\ell_{\A(Y)}(g)$.

\section{Examples}
\label{sec:examples}

In this section, we discuss three
important graphs associated to infinite-type surfaces that can help us understand the appropriate analog of pseudo-Anosovs in the infinite-type setting. Recall that constructing and classifying those mapping classes acting loxodromically on these graphs is one of the promising approaches to an analog of the Nielsen-Thurston classification for infinite-type surfaces and is the primary goal of \cite{abbott2021infinite, bavard2016hyperbolicite, bavard2018gromov, morales2020loxodromic}.

Note that the curve graph for an infinite-type surface is finite diameter (in fact, diameter 2) and this necessitates the introduction of better suited graphs. 

\subsection{Relative Arc Graph}
The relative arc graph was introduced by Aramayona--Fossas--Parlier as a generalization of the ray graph defined by Calegari and studied by Bavard in \cite{bavard2016hyperbolicite}. Fixing a finite collection of punctures (isolated planar ends) $P$ on $S$, the vertices of the relative arc graph $\A(S, P)$ are isotopy classes of simple arcs starting and ending on $P$. There is an edge between two vertices if the isotopy classes of arcs admit disjoint representatives. In \cite{aramayona2017arc}, Aramayona--Fossas--Parlier prove that  $\A(S, P)$ is infinite-diameter and $7$-hyperbolic. 

In our setting, we blow up all punctures of $S$ to boundary components (so that they do not contribute to the ends of $S$), and given an endperiodic map $t$, we choose $Y$ to be any finite-type subsurface containing $P$ that satisfies condition (1) of Theorem~\ref{thm:intro_main}. In particular, such a surface $Y$ containing $P$ is \emph{nondisplaceable}, i.e. for all $f \in \Homeo(S)$, $f(Y) \cap Y \neq \emptyset$. Thus, $Y$ must satisfy condition (2) of Theorem~\ref{thm:intro_main}. Any subsurface that contains $P$ is a witness for the graph $\A(S, P)$. Now taking any $g \in \map(Y)$ satisfying condition (3) of Theorem~\ref{thm:intro_main} gives an endperiodic map $f = tg$ that acts loxodromically on $\A(S, P)$. 

For an example, let $S$ be the ladder surface (the surface with exactly two ends, both of which are accumulated by genus) and let $P$ consist of one puncture $p$ that we view as a boundary component. Let $t$ be the standard handleshift on $S$, shown in Figure~\ref{fig:shift}, which shifts the handles over by one to the right and fixes $p$ pointwise. Then, the subsurface $Y$ shown in Figure~\ref{fig:shift} satisfies conditions (1) and (2) of Theorem~\ref{thm:intro_main}. In this example, $t(\partial_- Y)$ and $ t^{-1}(\partial_+ Y)$ are disjoint and so $d_Y(t(\partial_- Y), t^{-1}(\partial_+ Y)) =1$.

\begin{figure}[h]
\begin{center}
\begin{overpic}[trim = 1.25in 8.3in 1in 1.45in, clip=true, totalheight=0.1\textheight]{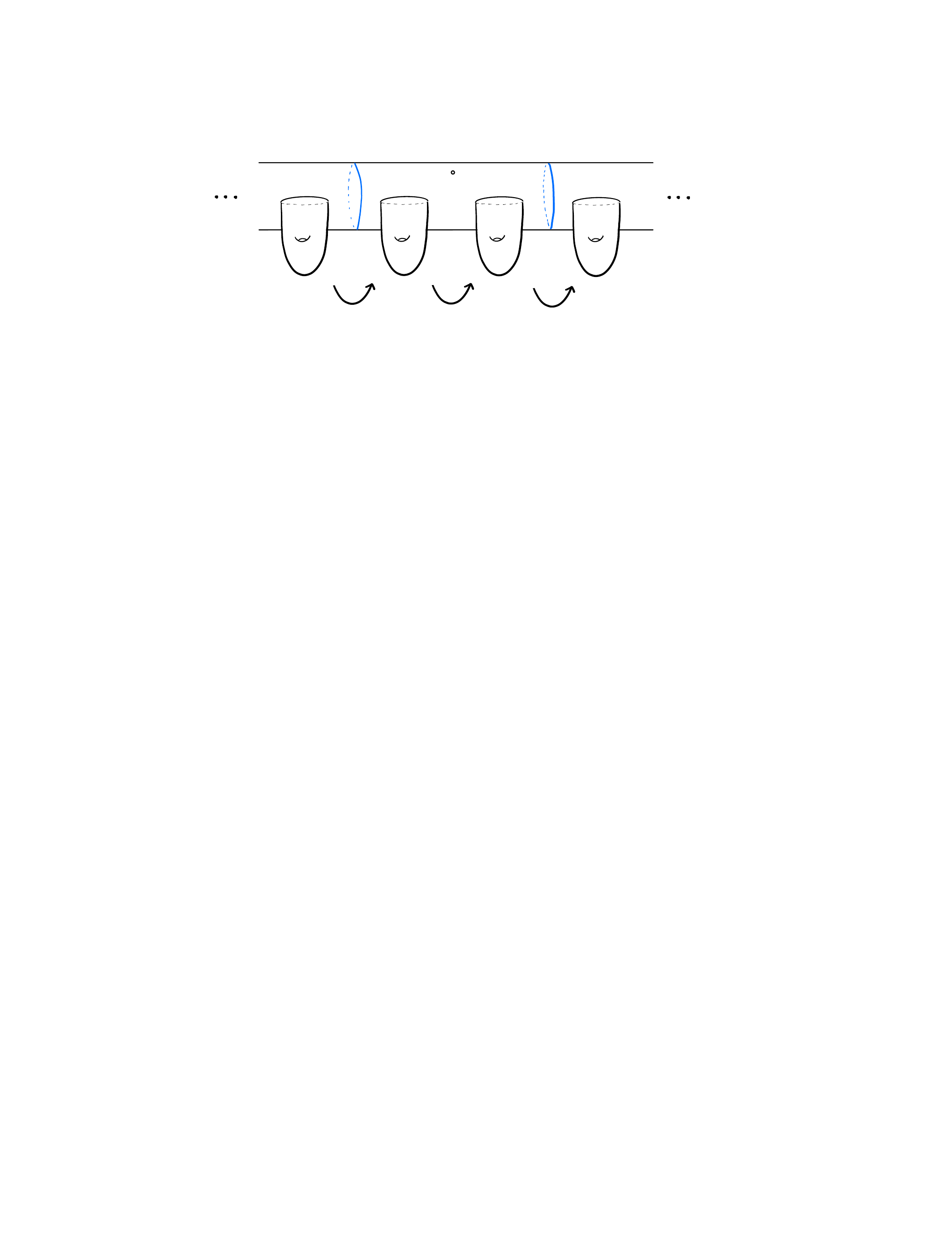}
\put(44,15){\large{\textcolor{blue}{$Y$}}}
\put(72,5){\large{$S$}}
\end{overpic}
\caption{}\label{fig:shift}
\end{center}
\end{figure}

Note that in the case where $P$ consists of exactly one puncture $p$, we obtain results similar to those of \cite{abbott2021infinite}, where the authors construct an infinite family of elements acting loxodromically on $\A(S,p)$, which can be expressed as the composition of a very simple endperiodic map on $S$ with a pseudo-Anosov homeomorphism on a surface $Y$ containing $p$. The key difference is that our results give a more general construction, allowing one to vary the element $g \in \map(Y)$ subject to the distance condition (3) in Theorem~\ref{thm:intro_main}, and our argument does not require the technical coding of arcs used in \cite{abbott2021infinite}.

\subsection{The omnipresent arc graph and the grand arc graph}
The omnipresent arc graph $\mathcal{O}(S)$ was defined by Fanoni--Ghaswala--McLeay in \cite{fanoni2020homeomorphic} and does not require $S$ to have a puncture. First, note that all of the surfaces $S$ we consider in this note have finitely many ends, and are therefore \emph{stable} as defined in \cite{fanoni2020homeomorphic}, which is a necessary condition for the results of that paper. When $S$ has finitely many ends, the vertices of the omnipresent arc graph are isotopy classes of arcs that go to and from distinct ends of $S$. 
The edges correspond to disjointness. When $S$ has at least three ends, the omnipresent arc graph is connected, $\delta$-hyperbolic, and has infinite diameter. 
When $S$ has two ends, the graph is infinite-diameter, but not hyperbolic. The grand arc graph, defined by Bar-Natan--Verberne in \cite{bar2021grand}, is constructed similarly. In general, these two graphs are not the same, but when $S$ has finitely many ends the grand arc graph is exactly the omnipresent arc graph.

Given an endperiodic map $t$ on a surface $S$ with at least $2$ ends, we can take $Y$ to be a sufficiently large subsurface separating all of the finitely many ends of $S$ (see Figure~\ref{fig:separating}) so that $Y$ satisfies condition (1) of Theorem~\ref{thm:intro_main}. Enlarging $Y$ if necessary, assume that $Y$ satisfies (2) of Theorem~\ref{thm:intro_main} as well.  In fact, when $S$ has at least $3$ ends, such a subsurface is nondisplaceable (see Lemma 4.1 of \cite{aougab2021isometry} for a proof of this fact), so (2) follows immediately from this fact as well. Given that $Y$ separates all of the ends of $S$, any arc to and from distinct ends must essentially intersect $Y$, and thus, $Y$ is a witness for $\mathcal{O}(S)$. Then, taking any $g \in \map(Y)$ satisfying (3) of Theorem~\ref{thm:intro_main} produces an endperiodic map $f = tg$ acting loxodromically on the omnipresent/grand arc graph. 

\begin{figure}[h]
\begin{center}
\begin{overpic}[trim = 1.25in 7.5in 1in 1.45in, clip=true, totalheight=0.23\textheight]{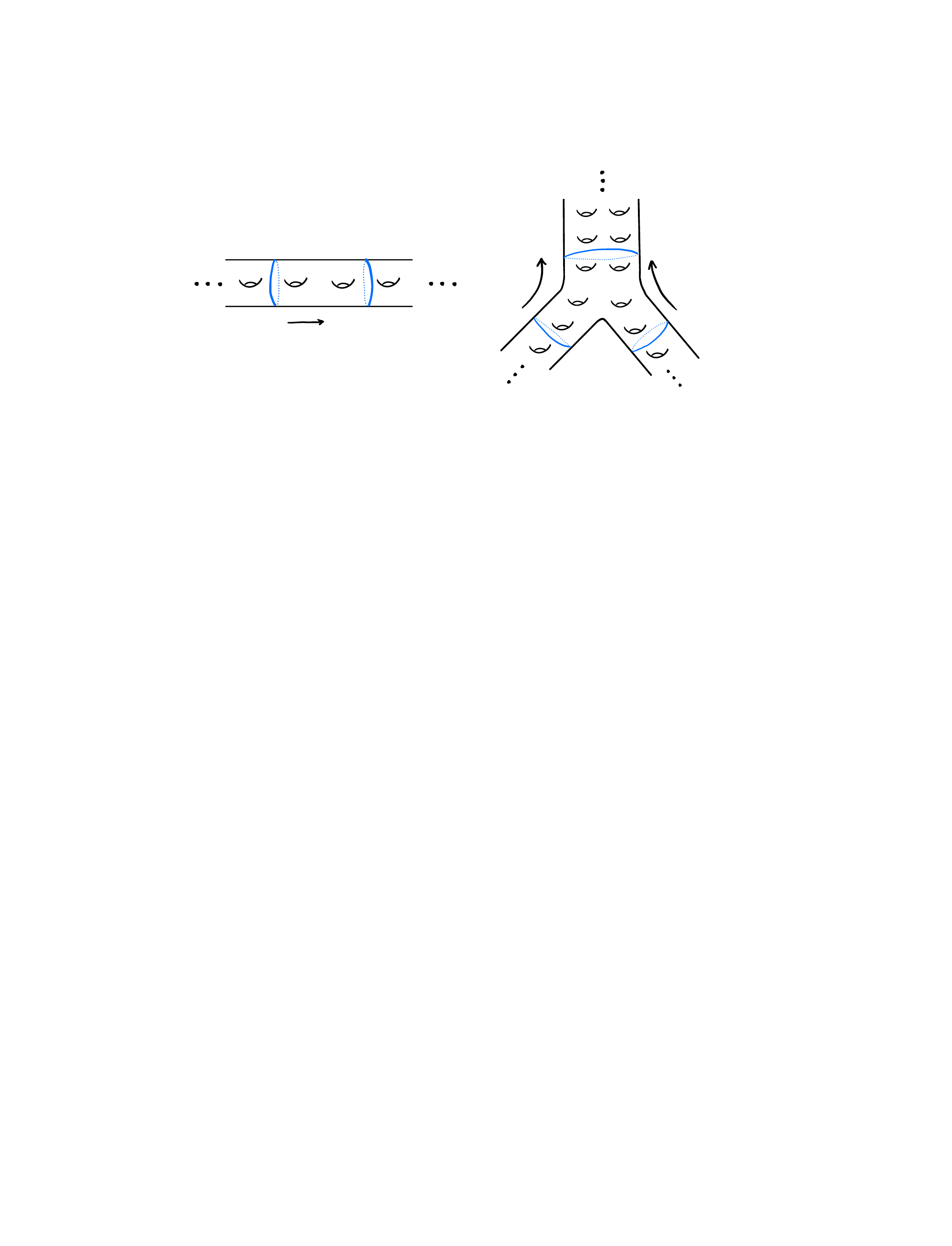}
\put(24,15){{\textcolor{blue}{$Y_1$}}}
\put(10,10){\large{$S_1$}}
\put(65,16){{\textcolor{blue}{$Y_2$}}}
\put(60,3){\large{$S_2$}}
\end{overpic}
\caption{The arrows in the figures indicate the direction of the endperiodic maps $t_i$ consisting of handleshifts in both cases. $Y_1$ and $Y_2$ are surfaces separating the ends of $S_1$ and $S_2$, respectively, that are compatible with $t_i$ in the sense that (1) and (2) of Theorem~\ref{thm:intro_main} are satisfied and $Y_i$ is a witness for $\mathcal{O}(S_i)$. }\label{fig:separating}
\end{center}
\end{figure}

We remark that the endperiodic map $f = tg$ constructed here is strongly irreducible in the sense of \cite{field2021end}. This essentially follows from \cite[Proposition 6.3]{field2021end}. A reasonable conjecture, suggested to us by Marissa Loving, is that the endperiodic maps that act loxodromically on $\mathcal{O}(S)$ are exactly the ones that are strongly irreducible.

\subsection{Separting curve graphs} 

In \cite{durham2018graphs}, Durham--Fanoni--Vlamis introduce the separating curve graphs $\sep(S, \mathcal{P})$ for certain infinite-type surfaces, where $\mathcal{P}$ is a finite collection of pairwise disjoint, closed subsets of the ends of $S$. These graphs are defined and interesting when $S$ has at least 4 ends and since $S$ has only finitely many ends in our setting, we can take $\mathcal{P} = e(S)$, i.e. each element of $\mathcal{P}$ is a singleton corresponding to an end of $S$. Then, the vertices of $\sep(S, \mathcal{P})$ correspond to isotopy classes of separating simple closed curves $c$ such that each component of $S \setminus c$ contains at least two ends of $S$. Edges in $\sep(S,\mathcal{P})$ correspond to disjointness, except in the case where $|e(S)| = 4$, where edges denote intersection number at most $2$. Note that $\sep(S, \mathcal{P})$ is an induced subgraph of the curve graph of $S$ (when $|e(S)| \neq 4$). Theorem 1.8 of \cite{durham2018graphs} states that, when $ 4 \leq |e(S)| < \infty$, $\sep(S, \mathcal{P})$ is connected, infinite-diameter, and $\delta$-hyperbolic. 

Given an endperiodic map $t$ on a surface $S$ with at least $4$ ends, we can take $Y$ to be a sufficiently large subsurface separating all of the finitely many ends of $S$, just as in the previous example, so that $Y$ satisfies condition (1) of Theorem~\ref{thm:intro_main}. Such a subsurface is nondisplaceable so (2) follows immediately from this fact. Moreover, \cite[Lemma 7.2]{durham2018graphs} shows that $Y$ is a witness for $\sep(S, \mathcal{P})$. Then, taking any $g \in \map(Y)$ satisfying (3) of Theorem~\ref{thm:intro_main} produces an endperiodic map $f = tg$ acting loxodromically on the separating curve graph $\sep(S, \mathcal{P})$.

\bibliographystyle{abbrv}
\bibliography{Endperiodic_lox.bbl}

\end{document}